\theoremstyle{plain}
\newtheorem{definition}{Definition}[section]
\newtheorem{theorem}[definition]{Theorem}
\newtheorem*{theorem*}{Theorem}
\newtheorem{remark}[definition]{Remark}
\newtheorem*{remark*}{Remark}
\newtheorem*{sideremark*}{Side Remark}\newtheorem*{mt*}{Main Theorem}
\newtheorem*{claim*}{Claim}
\newtheorem*{q*}{Question}
\newtheorem*{corollary*}{Corollary}
\newtheorem*{proposition*}{Proposition}
\newtheorem{example}[definition]{Example}
\newcommand{\R}{\mathbb{R}}
\newcommand{\na}{\nabla}
\newcommand{\dd}{{\rm d}}
\newcommand{\p}{\partial}
\newcommand{\e}{\epsilon}
\newcommand{\emb}{\hookrightarrow}
\newcommand{\map}{\rightarrow}
\newcommand{\G}{\Gamma}
\newcommand{\dvg}{\dd V_g}
\newcommand{\is}{{\mathscr{IS}}}
\newcommand{\E}{{\mathbf{E}}}
\newcommand{\homeo}{{\bf Homeo}}
\newcommand{\homeoo}{
\homeo_{\,0}}
\newcommand{\Z}{{\mathbb{Z}}}
\newcommand{\thomeo}{\widetilde{\homeoo}}
\newcommand{\tone}{{\mathbb{T}^1}}
\newcommand{\N}{{\mathcal{N}}}
\newcommand{\tn}{\mathbb{T}^n}
\newcommand{\volm}{{{\rm Vol}_g(M)}}
\def\XXint#1#2#3{{\setbox0=\hbox{$#1{#2#3}{\int}$ }
\vcenter{\hbox{$#2#3$ }}\kern-.6\wd0}}
\newcommand{\mres}{\mathbin{\vrule height 1.6ex depth 0pt width
0.13ex\vrule height 0.13ex depth 0pt width 1.3ex}}
\newcommand{\tth}{\widetilde{\theta}}
\newcommand{\id}{{\bf id}}
\newcommand{\diff}{{\bf Diff}}
\newcommand{\sdiff}{{\bf  SDiff}}
\newcommand{\on}{{\overline{n}}}
\newcommand{\vj}{{\widetilde{V_j}}}
\title{A smooth isotopy of volume-preserving diffeomorphisms on unit cube saving energy through extra dimensions}
\author{Siran Li}
\address{Siran Li: School of Mathematical Sciences, Shanghai Jiao Tong University, No.~6 Science Buildings,
800 Dongchuan Road, Minhang District, Shanghai, China (200240)}
\email{\texttt{siran.li@sjtu.edu.cn}}
\keywords{isotopy; diffeomorphism group; Euler equation; kinetic energy.}
\subjclass[2010]{76B03, 35Q35, 58B05, 57R57}
\date{\today}
\begin{document}

\maketitle

\begin{abstract}
We construct an explicit example of a smooth isotopy $\{\xi_t\}_{t \in [0,1]}$ of volume- and orientation-preserving diffeomorphisms on $[0,1]^n$ ($n \geq 3$) that has infinite total kinetic energy. This isotopy has no self-cancellation and is supported on  countably many disjoint tubular neighbourhoods of homothetic copies of the isometrically embedded image  of $(M,g)$, a ``topologically complicated'' Riemannian manifold-with-boundary. However, there exists another smooth isotopy that coincides with $\{\xi_t\}$ at $t=0$ and $t=1$ but of finite total kinetic energy. 
\end{abstract}

\section{Introduction and the main result}\label{sec: intro}


In the seminal paper  \cite{a} in 1966, Arnold   interpreted the Euler equations  for incompressible inviscid flows on a Riemannian manifold-with-boundary $(M,g)$ as the geodesic equation on $\sdiff(M)$, the group of orientation-, volume-, and boundary-preserving diffeomorphisms on $M$. The Euler equation on $(M,g)$ reads as follows:
\begin{equation}\label{euler}
\begin{cases}
\p_t v + {\rm div}(v \otimes v) + \na p = 0 \qquad \text{in } [0,T] \times M,\\
{\rm div}\, v = 0\qquad \text{in } [0,T] \times M,\\
\langle v, \nu \rangle = 0 \qquad \text{on } [0,T] \times \p M.
\end{cases}
\end{equation}
Here $v: [0,T] \times M \to TM$ is the velocity,   $p: [0,T] \times M \to \R$ is the pressure, $\nu$ is the outward unit normal vectorfield on the boundary, $\langle\bullet,\bullet\rangle$ is the inner product on $\p M$ induced by the metric $g$, and ${\rm div}$ is the divergence operator with respect to the metric $g$. The infinite-dimensional Lie group $\sdiff(M)$ is equipped with the $L^2$-metric. Assume $T=1$ for normalisation.


Arnold's viewpoint has led to fruitful researches on the Euler equations via global-geometric and variational methods. One important question is the following

\begin{quote}
{\bf Two-Point Problem:} Given $h_0$ and $h_1 \in \sdiff(M)$. Find a smooth curve $\{\xi_t\}_{t \in [0,1]}$ such that  $\xi_0 = h_0$,  $\xi_1=h_1$, and 
\begin{equation}\label{J}
J(\{\xi_t\}) := \frac{1}{2} \int_0^1 \left\|\frac{\p \xi_t(\bullet)}{\p t}\right\|_{L^2}^2\,\dd t < \infty.
\end{equation}
Here $\p \xi_t /\p t\in\G(TM)$ for each $t$, the $L^2$-norm is taken with respect to the metric $g$, and $J(\{\xi_t\})$ is the \emph{action} or \emph{total kinetic energy} of the isotopy $\{\xi_t\}$.
\end{quote}


Without loss of generality, we take $$h_0=\id \text{ (the identity map)}$$ throughout. Under this provision, we have

\begin{definition}\label{def: attain}
We say that a diffeomorphism 
$h_1 \in \sdiff(M)$ is \emph{attainable} if  the two-point problem is soluble for $h_1$. 
\end{definition}

In the case that $h_1$ is attainable, it is natural to further consider the following

\begin{quote}
{\bf Classical Variational Problem:} Given $h_1 \in \sdiff(M)$. Find a smooth curve $\{\xi_t\}_{t \in [0,1]}$ such that $\xi_0 = \id$,  $\xi_1=h_1$, and $\{\xi_t\}$ minimises  $J(\{\xi_t\})$ in \eqref{J}. 
\end{quote}

The above problems addresses whether two prescribed configurations of an incompressible fluid on $M$ can be nicely deformed into each other inside $\sdiff(M)$. This is essentially different from the Cauchy problem for PDE. A solution for the classical variational problem will give rise to a smooth solution for the Euler Eq.~
\eqref{euler}.

In a series of original works \cite{s89, s93, s94} around 1990s, Shnirelman showed that  answers to both the two-point problem and the classical variational problem are, in general, \emph{negative}. More precisely, denote $I^n:=[0,1]^n$; the following  was proved in \cite{s89, s93, s94}:
\begin{theorem}\label{thm: sh}
There are unattainable diffeomorphsims on $I^2$; in contrast, any diffeomorphism $h \in \sdiff(I^n)$ is attainable when $n \geq 3$. Also, there exists an $h_{\rm BAD} \in \sdiff\left(I^{n \geq 3}\right)$ such that there are no action-minimising curves of  diffeomorphisms connecting $\id$ and $h_{\rm BAD}$. 
\end{theorem}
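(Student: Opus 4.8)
The plan is to establish the three assertions of Theorem~\ref{thm: sh} by genuinely different means: a \emph{constructive} argument for attainability in dimension $\ge 3$, an \emph{a priori lower bound} on the action $J$ forcing the existence of an unattainable map on $I^2$, and a \emph{non-attainment of the infimum} argument for $h_{\rm BAD}$ that passes through the relaxed (generalised-flow) formulation of the variational problem. For the attainability statement in dimension $n\ge 3$ I would first reduce to realising "elementary rearrangements" efficiently: fix a dyadic scale $2^{-k}$, partition $I^n$ into $2^{kn}$ congruent subcubes, and recall the standard fact that permutations of these subcubes which are products of transpositions of adjacent subcubes approximate, as $k\to\infty$, any $h\in\sdiff(I^n)$ in the uniform topology. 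The whole weight of the argument is then carried by a single geometric observation: for $n\ge 3$ a transposition of two adjacent subcubes is realised by a smooth isotopy supported in an \emph{arbitrarily thin tube} joining them, because one subcube can be routed "around" the other through the extra dimension — this is the same reason a braid generator is trivial in $\R^{\ge 3}$ — and, by making the tube thin and the motion slow, the action of this elementary isotopy is bounded by $C\e$ for any $\e>0$. (In the plane the analogous move is forbidden in a thin tube by area preservation, which is exactly why the argument fails for $n=2$.)

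Given $h_1\in\sdiff(I^n)$, I would then build a sequence $h_1^{(k)}$ of finite products of such elementary rearrangements with $h_1^{(k)}\to h_1$ and $\mathrm{dist}\bigl(h_1^{(k-1)},h_1^{(k)}\bigr)\le\eta_k$ for a rapidly decaying $\eta_k$, concatenate the corresponding isotopies, and reparametrise in time so that the $k$-th block contributes at most $2^{-k}$ to $J$; the thin-tube estimate guarantees that the energy needed to perform a rearrangement of $\sim 2^{kn}$ bricks at scale $2^{-k}$ can be made as small as we like, so the energy budget is respected while the approximation still converges. The two technical points to discharge here are routine: making the concatenation $C^\infty$ at the joints (cutoff reparametrisations with all time-derivatives vanishing at the block endpoints) and quantifying the trade-off between the approximation error $\eta_k$ and the energy allotted to block $k$.

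For the unattainable map on $I^2$ the plan is to exhibit a lower bound on $J(\{\xi_t\})$ that can be driven to infinity. The naive estimate $J(\{\xi_t\})\ge \tfrac12\,\mathrm{dist}_{L^2}(\id,h_1)^2$ is useless because $\|h_1-\id\|_{L^2}\le\sqrt{2}$ is bounded, so the right quantity must detect the \emph{rigidity} of area-preserving rearrangements of the square — the fact that exchanging widely separated parcels of fluid in two dimensions is intrinsically expensive. I would construct $h_1$ as an infinite product $\prod_{k\ge 1}\sigma_k$ where $\sigma_k$ is supported on the $k$-th member of a shrinking disjoint family of sub-squares and implements a rearrangement whose transportation cost grows faster than any convergent series, and then show, via a two-dimensional lower bound of the type "any isotopy effecting $\sigma_k$ within its support costs at least $c_k$" (with $\sum c_k=\infty$), that every connecting curve has $J=\infty$. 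I expect this to be the \textbf{main obstacle} of the theorem: the needed lower bound must hold for \emph{all} smooth connecting isotopies, not merely "nice" ones, while still being large, and this is where Shnirelman's genuinely hard analysis lives — a robust proof presumably routes through the interaction of the $L^2$-metric on $\sdiff(I^2)$ with a quasi-morphism or with a geometric invariant that measures the accumulated rearrangement.

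Finally, for the existence of $h_{\rm BAD}$ I would work in Brenier's relaxed framework, embedding the classical variational problem into the minimisation of the action over generalised incompressible flows (probability measures on path space with the prescribed time-zero and time-one marginals), for which a minimiser always exists and the minimal action is characterised by a pressure field. The strategy is to design $h_{\rm BAD}\in\sdiff\!\left(I^{n\ge 3}\right)$ whose unique optimal generalised flow is genuinely \emph{non-deterministic} — particles issuing from a common point must split along several trajectories — so that it cannot be the lift of any plan concentrated on a single smooth isotopy; since the classical infimum of $J$ equals the relaxed minimum, this forces the classical infimum to be not attained by any smooth curve. Concretely one can take $h_{\rm BAD}$ to be a three-dimensional analogue of Brenier's splitting examples, built from disjoint sub-blocks on each of which it looks like the "exchange" map whose geodesic in $\sdiff$ is known to be non-classical; the remaining work is then to verify optimality of the candidate generalised flow through the pressure/duality criterion and to derive a contradiction between the existence of a classical minimiser and the uniqueness of the optimal plan.
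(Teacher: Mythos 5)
This is a statement the paper cites rather than proves: Theorem~\ref{thm: sh} is a summary of Shnirelman's results from \cite{s89, s93, s94}, introduced with ``the following was proved in \cite{s89, s93, s94}.'' There is therefore no proof in the paper to compare against; the only proof-adjacent commentary the paper offers is Remark~\ref{rem on bdry smoothness}, which sketches how Shnirelman's attainability proof actually runs (via McDuff \cite{mc} plus an iterative boundary-layer correction with the energy bound~\eqref{bdry layer}) and points to \cite[\S 2.4, Thm.~2.6]{s94} for the $I^2$ unattainability example.

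With that understood, a few substantive remarks on your blind sketch. For attainability in $n\ge 3$, you propose a ``brick-swapping through the extra dimension'' scheme. This is close to the intuition behind Shnirelman's bound on the diameter of $\sdiff(I^n)$ in \cite{s89}, but it is not what the paper identifies as the route for \cite[Thm.~A]{s93}: per Remark~\ref{rem on bdry smoothness}, that proof first invokes McDuff's construction of a suitable isotopy away from the boundary, then mends the boundary layers on dyadic time intervals using the estimate~\eqref{bdry layer}. Your sketch never confronts the boundary issue, which the paper emphasises is the subtle point (and the reason the resulting isotopy is only continuous, not smooth, up to $\p I^n$). Moreover, the claim that swapping two adjacent subcubes by a volume-preserving isotopy supported in an ``arbitrarily thin tube'' has action bounded by $C\e$ is precisely the nontrivial estimate that needs proving; the analogy with braid generators motivates why it should be possible in $n\ge 3$ but does not supply the bound. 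For the $I^2$ unattainability, your plan (a shrinking disjoint family of supports carrying rearrangements whose two-dimensional rigidity forces $\sum c_k=\infty$) is conceptually aligned with Shnirelman's example and indeed with the construction the present paper carries out in a different setting, but you yourself flag the needed lower bound on $J$ over \emph{all} connecting isotopies as the ``main obstacle'' and leave it open — that is the theorem. For $h_{\rm BAD}$ you outline Brenier's relaxation and a splitting argument; this is a plausible modern framing, but again the load-bearing steps (optimality of the candidate generalised flow, equality of classical and relaxed infima, uniqueness forcing non-realisability by a smooth curve) are named, not executed. In short: your sketches point in sensible directions, but each of the three parts deliberately defers the hard analysis, so as a proof attempt this has genuine gaps in all three places; and in any case the paper itself makes no attempt to prove this result, treating it as input from the literature.
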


The existence of unattainable diffeomorphisms on $I^2$ is closely related to the fact that the diameter of $\sdiff(I^2)$ with respect to the $L^2$-metric is infinite. This extends to general surfaces via symplectic geometric techniques. See Eliashberg--Ratiu \cite{er}. On the other hand, if $h=h_1$ is sufficiently close to $\id=h_0$ in the Sobolev $H^s$-norm for $s>\lfloor n/2 \rfloor +1$, then the two-point and classical variational problems are both soluble. See Ebin--Marsden \cite{em}. This current note is essentially motivated by Theorem~\ref{thm: sh}.



The non-existence of action-minimising paths in $\sdiff(M)$, if it is established, suggests that the putative fluid flows which ``minimise the kinetic energy'' may allow collisions,  merges, and splits of particle trajectories. Thus, one may need to forgo the smoothness of $\sdiff(M)$ and work in measure-theoretic settings. In 1989 Brenier \cite{b} introduced the notion of \emph{generalised flows}, which are probability measures on the path space $C^0(I,M)$. A generalised flow is incompressible if and only if its marginal at any time is equal to the normalised  volume measure on $M$. Roughly speaking, it corresponds to a measure-valued solution for the Euler equations~\eqref{euler}.

Pioneered by Brenier \cite{b}, variational models for incompressible generalised flows remain up to date a central topic in mathematical hydrodynamics. Many works have been devoted to understanding the approximations by diffeomorphisms, the role of pressure, the optimal transportational aspects, and the numerical algorithms for generalised flows. We refer the readers to Bernot--Figalli--Santambrogio \cite{bfs}, Daneri--Figalli \cite{df}, Ambrosio--Figalli \cite{af}, Brenier--Gangbo \cite{bg}, Eyink \cite{e}, and Benamou--Carlier--Nenna \cite{bcn}.  In addition, Arnold's paper \cite{a} is considerably influential on the study of diffeomorphism groups. See Misio\l ek--Preston \cite{mp} and many others.


The papers \cite{bcn, bfs, b, bg, df, e, s89, s93, s94} mentioned above on unattainable diffeomorphisms or  generalised flows all work with $I^n$ (the unit cube) or $\tn$ (the flat torus). Extensions to more general manifolds remain elusive in the large. To the knowledge of the author, the only exceptions are Ambrosio--Figalli \cite{af}, which established results for generalised flows on measure-preserving Lipschitz preimages of $I^n$ or $\tn$, and Eliashberg--Ratiu \cite{er}, which studied smooth isotopies on some classes of topologically complicated smooth manifolds. Both the formulation and the proof of our main result rely heavily on \cite{er}. This will be elaborated in subsequent sections.

On the other hand, notable developments concerning  measure-preserving homeomorphisms on manifolds-with-boundary have taken place. See, {\it e.g.}, Fathi \cite{fathi}, Schwartzman \cite{s}, and many subsequent works.

In this note, utilising results on unattainable diffeomorphisms established by Shnirelman \cite{s89, s93, s94} and exploiting the \emph{mass flow homomorphism} introduced by Fathi \cite[\S 5]{fathi}, we arrive at a somewhat surprising example of an isotopy $\{\xi_t\}_{t \in [0,1]}$; \emph{i.e.}, a smooth curve starting from $\id$ in the special diffeomorphism group $\sdiff(I^{n})$. This isotopy is obtained by glueing countably infinitely many disjoint ``building blocks'' together. Each building block is concentrated on a low-dimensional Riemannian manifold or manifold-with-boundary  $M$ that is ``topologically complicated'' as in Eliashberg--Ratiu \cite{er} and, loosely speaking, has the effect of stirring $M$ in one direction for a very large number of times $N$. The building blocks differ from each other only in $N$ up to translations and homotheties. The most important feature of the isotopy is that there is apparently no natural competing isotopy $\{\eta_t\}_{t \in [0,1]}$ with the same initial and terminal states but saves total kinetic energy/action with respect to $\{\xi_t\}_{t \in [0,1]}$; however, $J\big(\{\xi_t\}_{t \in [0,1]}\big)=\infty$, while there exists such an  $\{\eta_t\}_{t \in [0,1]}$ with $J\big(\{\eta_t\}_{t \in [0,1]}\big)<\infty$.

The only plausible energy-saving mechanism for the above construction is that, although the ``stirring'' on the low-dimensional submanifold $M$ cannot be deformed to reduce energy $J$ in $\sdiff(M)$, this ceases to be the case once we extend it to an ambient isotopy on $I^\on$, even if the extension only takes place in an arbitrarily small tubular neighbourhood of $M$. This explains the title of our paper. 

More precisely, our main result of the note is as follows:

\begin{theorem}\label{thm: main}
Let $I^n = [0,1]^n$ be the unit Euclidean cube; $n \geq 3$. There exists a smooth isotopy $\{\xi_t\}_{t \in [0,1]} \subset \sdiff(I^n)$ with infinite kinetic energy: $J\big(\{\xi_t\}_{t \in [0,1]}\big)=\infty$ and another smooth isotopy $\{\eta_t\}_{t \in [0,1]} \subset \sdiff(I^n)$ with $\eta_0=\xi_0=\id$, $\eta_1=\xi_1$, and $J\big(\{\eta_t\}_{t \in [0,1]}\big)<\infty$. 

In fact, let $(M,g)$ be a compact Riemannian manifold-with-boundary (possibly $\p M =\emptyset$) such that  $\dim M \geq 3$, $M$ has nontrivial first real homology, and $\pi_1(M)$ has trivial centre. Let $\iota: (M,g)\emb \E^n$ be an isometric embedding into the Euclidean space for some $n \geq \dim M$. Let $\{\rho_j\}$ be any sequence of positive numbers shrinking to zero as $j \to \infty$, such that there are disjoint Euclidean balls ${\bf B}_j \equiv {\bf B}_{\rho_j}(x_j)$ in $I^n$. Denote by $\iota_j : M \emb \frac{1}{2}{\bf B}_j\equiv {\bf B}_{\frac{\rho_j}{2}}(x_j)$ the embeddings obtained from $\iota$ via  homotheties and translations. Also, let $\{\delta_j\}$ be any sequence of positive numbers with $\delta_j < \frac{\rho_j}{4}$ such that $\delta_j$-tubular neighbourhoods of $\iota_j(M)$ are embedded. Then, the isotopy $\{\xi_t\}_{t \in [0,1]}$ can be constructed to be supported on the $\delta_j$-tubular neighbourhoods of $\iota_j(M)$.

The isotopy $\{\xi_t\}_{t \in [0,1]} \subset \sdiff(I^n)$ constructed here is ``genuinely of infinite kinetic energy'', in the sense that its restriction to each ${\bf B}_j$ never comes to a stop or return to any previous configurations at any time instant $t \in [0,1]$.
\end{theorem}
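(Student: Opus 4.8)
\textit{Plan.} The plan is to assemble $\{\xi_t\}$ as a countable disjoint ``sum'' of stirring blocks, one supported in each ${\bf B}_j$, and then to obtain the competitor $\{\eta_t\}$ directly from Shnirelman's attainability theorem. The first ingredient is a stirring field on $M$. Since $H_1(M;\R)\neq\{0\}$, Poincar\'e--Lefschetz duality furnishes a closed $(\dim M-1)$-form $\omega_0$ on $M$ (vanishing near $\partial M$ if $\partial M\neq\emptyset$) whose class is dual to a nonzero element of $H_1(M;\R)$; defining $X$ by the interior product $\iota_X\,\dvg=\omega_0$ produces a smooth divergence-free vectorfield on $(M,g)$, supported in the interior, with nonzero ``flux class'' $[\iota_X\,\dvg]=[\omega_0]$, and a generic perturbation inside that class makes the flow $\{\phi^X_s\}_{s\in\R}$ aperiodic ($\phi^X_s\neq\id$ for all $s\neq0$). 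The hypothesis that $\pi_1(M)$ has trivial centre enters precisely to make Fathi's mass-flow homomorphism \cite{fathi} single-valued on $\sdiff_0(M)$ with values in $H_1(M;\R)$ (the would-be indeterminacy is a central ``rotation'' subgroup of $\pi_1(M)$, hence trivial); since this homomorphism is Lipschitz for the $L^2$-metric, every isotopy on $M$ from $\id$ to $\phi^X_s$ has action $\gtrsim s^2$. This is the Eliashberg--Ratiu type rigidity \cite{er} that makes each block topologically essential and its stirring irreducible inside $\sdiff(M)$ --- i.e.\ the built-in ``no self-cancellation''.

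Next I would build block $j$. Transport $X$ by $\iota_j$ onto $\iota_j(M)\subset\tfrac12{\bf B}_j$ and extend it across the normal collar to a smooth vectorfield $Y_j$ on $I^n$ supported in the $\delta_j$-tube of $\iota_j(M)$: multiply by a cut-off at normal distance $\delta_j$ and restore $\mathrm{div}\,Y_j=0$ by a Moser-type volume correction localised in the tube, so that the flow of $Y_j$ restricted to $\iota_j(M)$ is $\iota_j$-conjugate to $\phi^X$ (hence again aperiodic and nowhere stationary). Then choose a clock $s_j\in C^\infty([0,1])$ with $s_j(0)=0$, $s_j'>0$ on all of $[0,1]$, and $\sup_{[0,1]}s_j=:G_j$, and set $\xi_t$ equal to the time-$s_j(t)$ flow of $Y_j$ on ${\bf B}_j$ and to the identity off $\bigcup_j{\bf B}_j$. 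Because $s_j$ is strictly increasing and the flow is aperiodic, $t\mapsto\xi_t|_{{\bf B}_j}$ is injective and never stationary, giving the ``never stops, never returns'' clause. The remaining two requirements pull against each other, and reconciling them is the crux. For $\xi_t$ to be, for every $t$, a bona fide $C^\infty$ diffeomorphism of the whole cube --- and the ${\bf B}_j$ necessarily accumulate --- one needs $\|\xi_t|_{{\bf B}_j}-\id\|_{C^m}\to0$ as $j\to\infty$ for every $m$; a Gr\"onwall estimate bounds this by $\lesssim_m G_j\,\delta_j^{-m}$, so it is enough to let $G_j$ decay faster than every power of $\delta_j$. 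On the other hand block $j$ contributes action $J_j=\tfrac12\|Y_j\|_{L^2(I^n)}^2\,\|s_j'\|_{L^2([0,1])}^2$, in which $\|Y_j\|_{L^2}^2\asymp\rho_j^{\,2+\dim M}\,\delta_j^{\,n-\dim M}$ is a fixed positive number once $\rho_j,\delta_j$ are given, so I must still arrange $\sum_j J_j=\infty$: the device is to give $s_j$ a very tall, very thin spike (of height $h_j$ and width $w_j$ with $h_j w_j\le G_j$) superimposed on a tiny constant drift, so that $\|s_j'\|_{L^2}^2\sim h_j^2 w_j$ can be made as large as needed --- e.g.\ $J_j\sim1/j$ --- while $\sup s_j=G_j$ stays superpolynomially small in $\delta_j$. (To make $J(\{\xi_t\})=\infty$ hold even after reparametrisation one can instead, with $r=\dim H_1(M;\R)\ge1$, drive $r$ commuting stirring fields by an injective nowhere-stationary curve in $\R^r$ that spirals infinitely often near --- but never onto --- the origin, so that block $j$ already has infinite $L^2$-length with $C^\infty$-small endpoint.)

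Finally, the terminal map $\xi_1$ equals the identity off $\bigcup_j{\bf B}_j$ and the smooth compactly-supported map ``time-$G_j$ flow of $Y_j$'' on ${\bf B}_j$; by the choice of $G_j$ these tend to $\id$ in $C^\infty$ along the accumulation set, so $\xi_1$ is an honest element of $\sdiff(I^n)$. As $n\ge3$, Theorem~\ref{thm: sh} says $\xi_1$ is attainable, yielding a smooth isotopy $\{\eta_t\}_{t\in[0,1]}\subset\sdiff(I^n)$ with $\eta_0=\id=\xi_0$, $\eta_1=\xi_1$ and $J(\{\eta_t\})<\infty$ --- the desired competitor, genuinely distinct from any reparametrisation of $\{\xi_t\}$ (which the spike-plus-accumulation structure forbids). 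I expect the main obstacle to be the block construction just described: carrying out the divergence-free extension into an arbitrarily thin prescribed tube with bounds uniform enough in $j$, while simultaneously tuning $(G_j,h_j,w_j)$ --- or the spiralling curves --- so that $\{\xi_t\}$ is truly a family of smooth cube diffeomorphisms, never resting or backtracking on any ${\bf B}_j$, and yet of infinite total action.
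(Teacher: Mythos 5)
Your overall architecture is the same as the paper's: build stirring blocks from the mass-flow/Eliashberg--Ratiu rigidity on $M$, push each one into a shrinking ball ${\bf B}_j$ via homothety, extend to a divergence-free field in a tubular neighbourhood, glue, and invoke Shnirelman's attainability theorem for the competitor. But the way you force $\sum_j J_j=\infty$ has a genuine gap, and a secondary step is over-engineered.

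\textbf{The main gap.} You impose $\|\xi_1|_{{\bf B}_j}-\id\|_{C^m}\to0$ for every $m$ as $j\to\infty$, which compels $G_j=\sup s_j$ to be superpolynomially small in $\delta_j$. That requirement destroys the topological lower bound: the mass-flow estimate only gives $J(\{\xi_t\}|_{{\bf B}_j})\gtrsim G_j^2\|Y_j\|_{L^2}^2$, which is now tiny and summable. To compensate you put a spike in $s_j'$, so that $J_j=\tfrac12\|Y_j\|_{L^2}^2\|s_j'\|_{L^2}^2$ diverges. But then the infinite action is purely a time-reparametrisation artifact: the block-wise uniform-speed isotopy $\eta_t|_{{\bf B}_j}:=\phi^{Y_j}_{G_j t}$ has the same endpoints, is supported in the same tubes (no extra dimensions used), and has $\sum_j\tfrac12 G_j^2\|Y_j\|_{L^2}^2<\infty$. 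This makes the statement vacuous in spirit --- it reduces to the elementary fact that $J$ is not reparametrisation-invariant --- and in particular one cannot claim, as you do, that the attainability competitor is ``genuinely distinct from any reparametrisation of $\{\xi_t\}$''. You flag this worry yourself and sketch the spiralling-curve alternative in parentheses, but that is a different and underdeveloped construction. The paper's resolution is cleaner and avoids the whole tension: the balls ${\bf B}_j$ are required to accumulate only on $\partial I^n$ (see Remark~\ref{rem on bdry smoothness}), where elements of $\sdiff(I^n)$ need only be continuous, so there is no $C^\infty$-smallness constraint at the accumulation set. This frees the block isotopies $\{h_{j,t}\}$ to traverse the fixed loop $\sigma$ a number $N(j)\to\infty$ of times, as in Eq.~\eqref{turns}; then $\widetilde{\theta}(\{h_{j,t}\})=N(j)[\sigma]$ depends only on the endpoint $h_{j,1}$ (here trivial centre of $\pi_1(M)$ is used, through \cite[Prop.~5.1]{fathi}, to kill $\Gamma$), and Jensen's inequality gives $J(\{h_{j,t}\})\geq c\,{\rm Vol}_g(M)\,N(j)^2$ for \emph{any} isotopy with those endpoints --- a lower bound immune to reparametrisation. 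Choosing $N(j)$ suitably (Eq.~\eqref{final lower bound} and after) then yields $J(\{\xi_t\})=\infty$ for topological, not parametric, reasons, so the energy-saving competitor genuinely has to leave the tubes.

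\textbf{A secondary difference.} You invoke a Moser-type volume correction to restore $\mathrm{div}\,Y_j=0$ after cutting off. This is unnecessary: in geodesic normal coordinates of the tubular neighbourhood $\mathcal{T}_\delta(M)$ the Euclidean metric is block-diagonal $g_{\rm E}=\mathrm{diag}(g,I)$, and the cut-off $\eta$ depends only on the vertical coordinate $x_v$ while $V_j$ is purely horizontal and $g$-divergence-free; a direct computation (as in Step~2 of the paper's proof) shows $\mathrm{div}_{g_{\rm E}}\bigl(\eta(x_v)V_j(x_h)\bigr)=0$ already. Skipping the Moser step also avoids having to control that correction uniformly in $j$, which is the ``main obstacle'' you anticipate at the end.
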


The second paragraph of Theorem~\ref{thm: main} maintains that the ``wild'' isotopy $\{\xi_t\}_{t \in [0,1]}$ can be constructed in a sufficiently localised fashion; \emph{i.e.}, supported on arbitrarily small tubular neighbourhoods of $\iota_j(M)$, the scaled embedded images of $M$ into aribitrarily small disjoint balls in $I^n$. We say that $\{\xi_t\}_{t \in [0,1]}$ is supported on a set $\mathcal{K}$ if $\xi_t \equiv \id$ outside $\mathcal{K}$ for all $t \in [0,1]$. Here the submanifold $M$ satisfies the same topological conditions as in Eliashberg--Ratiu \cite[Appendix~A]{er} --- $\mathcal{Z}\big(\pi_1(M)\big) = \{0\}$ and $H_1(M;\R)\neq \{0\}$ --- so that the mass flow (see \S\ref{sec: mass flow} below for details) can be as large as we want.

To illustrate the topological conditions on $M$, consider the following examples below. (The second is pointed out by a referee of an earlier version.)
\begin{itemize}
\item
A rope in $\E^3$, \emph{i.e.}, thickening of a knot by making a tubular neighbourhood of it solid. It has nontrivial first real homology and its fundamental group has nontrivial centre.
\item
Handlebody of genus 2: it has nontrivial first real homology, but the fundamental group is free on $2$-generators and hence has trivial centre. 
\end{itemize}

Before presenting the proof, a few crucial remarks are in order:

\begin{remark}
One may deduce from the proof of Theorem~\ref{thm: main} that each building block is energy saving, namely that
\begin{align*}
J\left(\{\eta_t\}_{t \in [0,1]}\big|_{{\bf B}_j}\right) < J\left(\{\xi_t\}_{t \in [0,1]}\big|_{{\bf B}_j}\right)\qquad \text{for every } j = 1,2,3,\ldots.
\end{align*}
This is because each building block differs from the others only in terms of $N(j)$, the number of rounds that the loop $\sigma$ traverses, in Eq.~\eqref{turns}. Moreover, the building blocks have disjoint supports from each other, so there is no reason for only some of those to be energy saving. 
\end{remark}

\begin{remark}\label{rem on bdry smoothness}
To ensure that the ``wild'' isotopy $\{\xi_t\}_{t \in [0,1]}$ lies in $\sdiff(I^n)$, the balls ${\bf B}_j$ cannot have accumulation points in the interior of $I^n$. Therefore, heuristically speaking, the construction of $\{\xi_t\}_{t \in [0,1]}$ consists of ``more and more rapid turns made on finer and finer scales'' which eventually accumulate on the boundary $\p I^n$. This is possible since an element of $\sdiff(I^n)$ is only assumed to be continuous up to the boundary.

The above convention on $\sdiff(I^n)$ is consistent with Shnirelman's results in \cite{s89, s93, s94}, especially \cite[Theorem~A]{s93} --- each element in $\sdiff(I^n)$ is attainable when $n \geq 3$ --- which is key to our proof of Theorem~\ref{thm: main}.

In fact, the proof of \cite[Theorem~A]{s93} is based upon the result by McDuff \cite{mc} on the existence of $\{\xi_t\}_{t \in [0,1]}$ which is continuous in $t$ on each subdomain $\mathcal{K}' \Subset I^n$ away from the boundary. Then, one carefully corrects the ``boundary layers'' with controllable total kinetic energy. More precisely, \cite[Lemma~1.5]{s93}  shows that for arbitrary $0<\e<\delta$, any $\xi_1 \in \sdiff(\mathcal{K_{\e,\delta}})$ can be connected to $\xi_0 = \id$ through an isotopy $\left\{\xi_t^{\e,\delta}\right\}_{t \in [0,1]}$ supported on $\mathcal{K_{\e,\delta}}$ for all time and has 
\begin{equation}\label{bdry layer}
J \left(\left\{\xi_t^{\e,\delta}\right\}_{t\in [0,1]}\right) \leq C(\delta-\e),
\end{equation}
where $C=C(n)$ is a purely dimensional constant and $\mathcal{K_{\e,\delta}}:=\left\{x \in I^n:\, \e < {\rm dist}(x, \p I^n)<\delta\right\}$. Then, as in \cite[pp.282-283, proof of Theorem~A]{s93}, one iteratively corrects on time interval $\left[1-2^k, 1-2^{k+1}\right]$ the ``boundary layer'' in $\mathcal{K}_{\e_k,\delta_k}$ with $0<\e_k<\delta_k := 4^{-k}$ for $k = 1,2,3,\ldots$. In this way, thanks to Eq.~\eqref{bdry layer}, the resulting isotopy $\{\xi_t\}_{t \in [0,1]}$ has finite total kinetic energy not only on subdomains away from the boundary as in \cite{mc}, but in the whole $I^n$. Nonetheless, note that the above construction does not ensure that $\{\xi_t\}_{t \in [0,1]}$ is smooth up to the boundary.

We also remark that Shnirelman's example of unattainable diffeomorphism on $I^2$ is also continuously but not smooth up to the boundary. See \cite[\S 2.4, Theorem~2.6]{s94}. In fact, our construction in this note is essentially motivated by the idea of this construction. 
\end{remark}

\begin{remark}
Theorem~\ref{thm: main} remains valid for any closed Riemannian surface $M$ (\emph{i.e.}, when $\dim M=2$) with $\mathcal{Z}\big(\pi_1(M)\big)=\{0\}$ and $H_1(M,\R)\neq \{0\}$, but subtleties may arise when $\dim M=2$ and $\p M \neq \emptyset$. The proof goes through verbatim, since there is no danger in selecting generators of $H_1(M,\R)$ represented by smooth embedded loops on closed surfaces.
\end{remark}


\section{Notations and Nomenclatures}

The following notations shall be used throughout this note:

\begin{itemize}

\item
$I=[0,1]$ and $I^n = [0,1]^n$.

\item
For a set $S$, denote by $\mathring{S}$ its interior (with respect to an obvious topology in context).

\item
For sets $S$ and $R$, the symbol $S \sim R$ denotes the set difference. 

\item
For a group $G$, the symbol $\mathcal{Z}(G)$ denotes its centre.

\item
$(M,g)$ is a Riemannian manifold-with-boundary if $M$ is locally diffeomorphic to $\R^n$ or $\overline{\R^n_+}=\{x=(x',x^n):\,x'\in\R^{n-1}, \, x^n \geq 0\}$, and $g$ is a Riemannian metric on $M$.

\item
$\p M$ is the boundary of the manifold $M$.

\item
$\id$ denotes the identity map whose domain is clear from the context. 

\item For a topological group $X$, the symbol $X_0$ denotes the identity component of $X$.

\item
$\E^n$ is $\R^n$ equipped with the Euclidean metric.

\item
Let ${\bf B}={\bf B}_r(x)$ be a Euclidean open ball in $\E^n$. For any $\lambda>0$ write $\lambda{\bf B} := {\bf B}_{\lambda r}(x)$.

\item
$\homeo(M)$ is the group of homeomorphisms on $M$. 

\item
Let $\mu$ be a measure on $M$. $\homeo(M;\mu)$ is the $\mu$-preserving subgroup of 
$\homeo(M)$, namely that 
$$\homeo(M;\mu):=\left\{\Phi \in \homeo(M):\, \Phi_\# \mu= \mu\right\}.$$ As in \cite{fathi}, the measure $\mu$ in this note is ``good'' in the sense of Oxtoby--Ulam \cite{ou}; {\it i.e.}, $\mu$ is atomless, its support is the whole of $M$, and $\mu(\p M)=0$. 

\item
$\thomeo(M;\mu)$ is the identity component of the universal cover of $\homeo(M;\mu)$. 
\item
$\diff(M)$ is the group of diffeomorphisms on $M$. When $\p M \neq \emptyset$, an element of $\diff(M)$ also fixes the boundary (hence the corresponding vectorfields are perpendicular to $\p M$).

\item
$\Phi_\#$ and $\Phi^\#$ denote the pushforward and pullback under $\Phi$, respectively. They can be defined for suitable measures, maps, tensorfields, etc.

\item
$\dvg$ is the Riemannian volume measure on $M$; $\Omega$ is the normalised Riemannian volume measure, \emph{i.e.}, $\Omega = \left[{\rm Vol}_g(M)\right]^{-1}\dvg$. 
\item
$\mathcal{L}^n$ is the $n$-dimensional Lebesgue measure. 
\item
$\sdiff(M):=\{\Phi \in \diff(M):\, \Phi_\# \Omega = \Omega\}$.

\item
A path $\{\xi_t\}_{t \in I}$ in $\sdiff(M)$ with $\xi_0=\id$ is known as a smooth \emph{isotopy} connecting $\id$ to $\xi_1$. We also write it as $\{\xi_t\}$. In Fathi \cite[Appendix~A.5]{fathi} it is written as $(\xi_t)$, and the group of isotopies is denoted by $\mathscr{IS}^\infty(M)$. Moreover, $\mathscr{IS}(M;\mu)$ denotes the subgroup of $\mu$-preserving smooth isotopies. Therefore, the two-point problem in \S\ref{sec: intro} concerns the existence of smooth isotopies with volume measure-preserving prescribed endpoints. 

\item
Say that $\{\xi_t\}_{t \in [0,1]}$ is supported on a set $\mathcal{K}$ if $\xi_t \equiv \id$ outside $\mathcal{K}$ for all $t \in [0,1]$.

\item
$J\{\xi_t\}$ denotes the action (total kinetic energy) of the isotopy $\{\xi_t\}$, defined in Eq.~\eqref{J}.

\item
$H_k(M;\R)$ and $H^k(M;\R)$ denote the $k$-th $\R$-coefficient homology and cohomology groups of $M$, respectively.

\item
$\pi_1(M)$ is the fundamental group of $M$.

\item
For a vectorfield $X$ and a differential form $\beta$, the interior multiplication of $X$ to $\beta$ is written as $X\mres \beta$.

\item
Unless otherwise specified, $[\bullet]$ designates an equivalence class. For example, for a closed $k$-form $\beta$ on $M$, $[\beta]$ denotes the corresponding cohomology class in $H^k(M;\R)$. 

\item
$\tone = \R\slash 2\pi\mathbb{Z} = \mathbb{S}^1$. We write $\tone$ to emphasise that the group operation is additive.

\item
$\left[M,\tone\right]$ denotes the homotopy class of maps $M\to\tone$.

\item
For $f: X \map \tone$ with $f(0)=0$,  write $\overline{f}$ for its lift to the universal cover $\R$ with $\overline{f}(0)=0$. 

\item
We reserve the symbol 
$$\theta: \homeo_0(M,\mu) \longrightarrow H_1(M;R)$$ for the mass flow homomorphism on $M$. See \S \ref{sec: mass flow} for its definition.

\item
$\tth$ is the lift of $\theta$. See \S \ref{sec: mass flow} too for details.
\end{itemize}

\section{Mass flow}\label{sec: mass flow}

An essential ingredient of our developments is the mass flow homomorphism defined by Fathi \cite[\S 5]{fathi}. The idea had also appeared in Schwartzman \cite{s} under the name of ``asymptotic cycles''. We summarise here the definition and some properties of the mass flow.

Let $M$ be a compact metric space. Given any good measure $\mu$ on $M$ (in the  sense of Oxtoby--Ulam \cite{ou}), one can define a natural map
\begin{equation}\label{tilde-massflow}
\tth: \thomeo(M;\mu) \longrightarrow {\rm Hom}\left(\left[M,\tone\right];\R\right),
\end{equation}
where $\left[M,\tone\right]$ is the homotopy class of maps from $M$ to $\tone$. When $M$ is a compact manifold-with-boundary, we have $${\rm Hom}\left(\left[M,\tone\right];\R\right)\cong H_1(M;\R),$$ as well as
\begin{equation}\label{h as isotopy}
 \thomeo(M;\mu) = \is(M;\mu) \slash \sim,
\end{equation} 
with the equivalence relation $\{h_t\} \sim \{k_t\}$ if and only if $h_0=k_0=\id$, $h_1=k_1$, and there is a continuous map $F: I \times I \to \homeo(M;\mu)$ such that $F_{0,s}=\id$, $F_{1,s}=h_1$, $F_{t,0}=h_t$, and $F_{t,1}=k_t$. The space $\is(M;\mu) \slash \sim$ is equipped with the quotient topology derived from the compact-open topology on  the isotopy group. 

To define $\tth$ in Eq.~\eqref{tilde-massflow}, let us take any $\{h_t\} \in \is(M;\mu)$. Then, for any $f \in C^0\left(M,\tone\right)$, we have a homotopy $(f\circ h_t - f) : M \map \tone$ which equals $0$ at $t=0$. So one may lift it in a unique way to a function $\overline{f \circ h_t - f}: M \map \R$ with $\overline{f \circ h_0 - f}=0$. We set
\begin{equation}\label{tilde theta, def}
\tth(\{h_t\}) (f) := \int_M \overline{f \circ h_1 - f}\,\dd\mu.
\end{equation}
One can easily check that $\tth$ induces a homomorphism as in Eq.~\eqref{tilde-massflow}, and that $\tth$ is continuous if ${\rm Hom}\left(\left[M,\tone\right];\R\right)$ is endowed with the weak topology.

Write $\N(M;\mu)$ for the kernel of the universal cover $\thomeo(M;\mu) \to \homeo_0(M;\mu)$. It is identified with the loop space:
\begin{align*}
\N(M;\mu) &\cong \Big\{ \big[ \{h_t\} \big] \in \is(M;\mu)\slash \sim :\, h_1=\id \Big\}\\
&\cong \Big\{ \{h_t\}  \in \thomeo(M;\mu):\, h_1=\id \Big\}.
\end{align*}
If $M$ is a compact manifold-with-boundary, we further set
\begin{equation}\label{Gamma}
\G := \tth\left( \N(M;\mu)\right) \leq H_1(M;\R).
\end{equation}
Then $\tth$ declines to a homomorphism
\begin{equation}\label{theta, massflow}
\theta: \homeo_0(M;\mu) \longrightarrow H_1(M;\R) \slash \G.
\end{equation}
We shall call either $\tth$ in Eq.~\eqref{tilde-massflow} or $\theta$ in Eq.~\eqref{theta, massflow} the \emph{mass flow homomorphism}.

The integrand $\overline{f \circ h_1 - f}$ on the right-hand side of Eq.~\eqref{tilde theta, def} should be understood as $\lim_{t \nearrow 1}\left(\overline{f \circ h_t - f}\right)$. For $f: M \to \tone$ with $M$ connected, $\overline{f \circ h_1 - f}$ (where $h_1 = \id$) can potentially take any constant integer as its value. Thus $\G$ in Eq.~\eqref{Gamma} is nontrivial in general.

When $M$ is a compact Riemannian manifold-with-boundary equipped with a normalised volume form $\Omega$, namely that $\int_M \Omega=1$, one may further characterise $\tth$ via smooth isotopies. (Fathi \cite{fathi} attributed this result to W. Thurston, ``\emph{On the structure of the group of volume preserving diffeomorphism} [to appear]''. It seems to the author that this  paper has never appeared in publication, though.) Indeed, $\tth$ is equivalent via the Poincar\'{e} duality to 
\begin{equation*}
\widetilde{V} : \is^\infty(M;\Omega) \longrightarrow H^{n-1}(M;\R),
\end{equation*}
where for $\{h_t\} \in \is^\infty(M;\Omega)$ we define $\widetilde{V} (\{h_t\})$ as follows: let $X_t \in \G(TM)$ be the vectorfield whose integral flow is $h_t$. Then 
\begin{equation*}
\widetilde{V} (\{h_t\}) := \int_0^1 X_t \mres \Omega \,\dd t,
\end{equation*}
where $\mres$ denotes the interior multiplication.

Finally, let us present \cite[Proposition~5.1]{fathi}, attributed therein to D. Sullivan. Consider the homomorphisms:
\begin{eqnarray*}
&&a_M: \left[M,\tone\right] \longrightarrow H^1(M;\Z),\qquad f \longmapsto f^\#\sigma;\\
&&b_M: H_1(M;\Z) \longrightarrow {\rm Hom}\left(H^1(X;\Z);\Z\right),\qquad\text{the natural duality};\\
&&h_M: \pi_1(M) \longrightarrow H_1(M;\Z),\qquad\text{the Hurewicz map},
\end{eqnarray*}
and define
\begin{equation*}
\gamma_M :=  a_M^* \circ  b_M \circ h_M.
\end{equation*}
Then, whenever $M$ is a connected compact metric space, it holds that
\begin{equation}\label{sullivan}
\G \subset \gamma_M\Big(\mathcal{Z}\big(\pi_1(M)\big)\Big).
\end{equation}

\section{Proof of the main Theorem~\ref{thm: main}}\label{sec: proof} 

Now we are ready to prove the main theorem by explicitly constructing the infinite-energy isotopy $\{\xi_t\}_{t \in I} \subset \sdiff(I^n)$ and deducing the existence of finite-energy isotopy $\{\eta_t\}_{t \in I}$ from Shnirelman's theorem on attainable diffeomorphisms (\cite{s93}). As commented prior to the statement of the theorem, our proof consists of  three key components:
\begin{enumerate}
\item
the mass flow homomorphism in \S\ref{sec: mass flow};
\item
the idea of Shnirelman's proof for the existence of an unattainable diffeomorphism on $I^2$ (\emph{cf.} Theorem~\ref{thm: sh} and \cite{s94}); and
 \item
lower bound for total kinetic energy $J$ by the mass flow (\emph{cf.} Eliashberg--Ratiu \cite[Appendix~A]{er}).
\end{enumerate}

\begin{proof}[Proof of  Theorem~\ref{thm: main}]

We divide our arguments into five steps below.

\smallskip
\noindent
{\bf Step~1.} Let $(M,g)$ be a compact Riemannian manifold-with-boundary (possibly with $\p M = \emptyset$) such that
\begin{align*}
\mathcal{Z}\big(\pi_1(M)\big)=\{0\},\quad H_1(M,\R) \neq \{0\}, \quad \text{ and } m=\dim M \geq 3.
\end{align*}
Without loss of generality, assume that $M$ is connected. Also let
\begin{equation*}
\iota: (M,g) \emb \E^{{n}}
\end{equation*}
be a smooth isometric embedding into Euclidean space, whose existence is guaranteed by Nash's embedding theorem.

Consider the mass flow homomorphism in Eq.~\eqref{theta, massflow}: $$\theta: \homeoo(M,\mu) \to H_1(M;\R)\slash\G,$$ where $\mu$ is the normalised volume measure on $(M,g)$ (which is clearly ``good'' in the sense of Oxtoby--Ulam \cite{ou}) and $\homeoo(M,\mu)$ is the identity component of the group of $\mu$-preserving homeomorphisms. Under the assumption that $\pi_1(M)$ has trivial centre, $\G$ is trivial in view of Eq.~\eqref{sullivan}. In fact,  $\theta$ is a group epimorphism (\cite[\S 5]{fathi}).

For a closed manifold $M$, we may choose generators of the first real homology group $H_1(M;\R)$ to be smooth loops with smoothly embedded tubular neighbourhoods. When $M$ is a compact manifold-with-boundary, this can also be done with tubular neighbourhoods uniformly away from the boundary, thanks to the product structure (\emph{i.e.}, the ``collar'') near $\p M$.

Fix a generator $[\sigma] \in H_1(M;\R)$ as in the previous paragraph. For each $j\in \mathbb{N}$ take $h_j \in \homeoo(M,\mu)$ such that 
\begin{equation}\label{turns}
\theta(h_j) = N(j)[\sigma],
\end{equation}
where $N(j)$ is a large natural number to be specified later. Thus $\|\theta(h_j)\|= c N(j)$, where the constant $c$ depends only on the choice of $[\sigma]$. Here and hereafter, $\|\bullet\|$ denotes the quotient norm on cohomology/homology groups induced by the usual $L^2$-norm on differential forms.

Moreover, by virtue of Eq.~\eqref{h as isotopy}, via lifting to the universal cover of $\homeoo(M,\mu)$ one may identify each $h_j$ with an isotopy on $(M,g)$, denoted as $\left\{h_{j,t}\right\}_{t\in I}$.

\smallskip
\noindent
{\bf Step~2.}
To proceed, let us identify $M$ with its isometrically embedded image $\iota(M)$ in the sequel. Say that $M \subset {\bf B}_R(0) \subset \E^n$. For each $j \in \mathbb{N}$ there exists $O_j$, a composition of homotheties and Euclidean rigid motions on $\E^n$, such that
\begin{align*}
O_j(M) \subset \frac{1}{2}{\bf B}_j \equiv {\bf B}_{\frac{\rho_j}{2}}(x_j) \subset I^n.
\end{align*}
The balls ${\bf B}_j$ are as in the statement of the theorem; in particular, they are disjoint from each other. Note that the Jacobian of $O_j$ is given by
\begin{align}\label{jacobian of Oj}
{\bf Jac}(O_j) = \left(\frac{\rho_j}{2R}\right)^n.
\end{align}

Recall that $\left\{h_{j,t}\right\}_{t\in I}$ obtained above is an isotopy on $M$ for each $j$. Now let us extend it to an ambient isotopy $\{\mathcal{H}_{j,t}\}_{t\in I} \subset \sdiff(I^n)$. Suppose that $M\subset {\bf B}_R(0) \subset \E^n$ has an smoothly embedded $\delta$-tubular neighbourhood $\mathcal{T}_\delta(M)$, and denote by $\pi$ the nearest point projection from $\mathcal{T}_\delta(M)$ to $M$. Also recall that for each $j$ the vectorfield
\begin{align}\label{Vj, new}
V_j := \frac{\p h_{j,t}}{\p t} \circ \left(h_{j,t}\right)^{-1} \in \G(TM)
\end{align} 
is divergence-free on $M$, since $\left\{h_{j,t}\right\}_{t \in I} \subset \sdiff(M)$. 

The extension $\{\mathcal{H}_t\}_{t\in I}$ can be given by defining a divergence-free vectorfield $\vj \in \G\left(T\mathcal{T}_\delta(M)\right) $ which extends $V_j$.  For this purpose, choose a cutoff function $\eta \in C^\infty\big( [0,\infty[, [0,1]\big)$ such that
\begin{equation*}
\eta(s) = \begin{cases}
1\qquad\qquad\qquad \text{ for } 0 \leq s \leq \frac{\delta}{2},\\
{\rm decreasing }\qquad\text{ for } \frac{\delta}{2} < s < \frac{3\delta}{4},\\
0\qquad\qquad\qquad \text{ for } \frac{3\delta}{4} \leq s.
\end{cases}
\end{equation*}
Consider the geodesic normal coordinate on $\mathcal{T}_\delta(M)$: a generic point can be written as $x = x_h + x_v \equiv \left[x_1, \ldots, x_m\right]^\top + \left[x_{m+1}, \ldots, x_{n+m}\right]^\top$, where each coordinate component of the vertical component $x_v$ lies in $[-\delta,\delta]$, and $x_h \in M$ is the horizontal component of $x$. Then, for any vectorfield $W \in \G\left(T\mathcal{T}_\delta(M)\right) $ we have $W = W_h + W_v$ with $W_h = \pi_\# W = \dd \pi (W)$. In the geodesic normal coordinate we may express the Euclidean metric as
\begin{align*}
g_{\rm E} = \begin{bmatrix}
g &0\\
0& I_{(n-m) \times (n-m)}
\end{bmatrix}.
\end{align*}
Thus, the Euclidean divergence of $\eta W$ can be computed in local coordinates via
\begin{align*}
{\rm div} (\eta W) &= \sum_{j=1}^{n} \frac{1}{\sqrt{\det g_{\rm E}}} \p_j \left(\sqrt{\det g_{\rm E}} \,\eta W^j\right)\\
&= \sum_{j=1}^{n} \frac{1}{\sqrt{\det g}} \p_j \left(\sqrt{\det g} \,\eta W^j\right)\\
&=\eta\, {\rm div}_g ( W_h) + \sum_{j=1}^n W^j \p_j\eta + \sum_{j=n-m+1}^{n} \Big\{\p_j \left(\eta W^j\right) + \p_j \left(\log \det g\right) \eta W^j\Big\}\\
&= \sum_{j=n-m+1}^{n} \Big\{\p_j \left(\eta W^j\right) + \p_j \left(\log \det g\right) \eta W^j\Big\},
\end{align*}
thanks to the divergence-free property of $W_h$. We adopt the slight abuse of notations:
\begin{align*}
\eta(x) \equiv \eta\big(|{x_v}|_\infty\big) = \eta \Big(\max_{1\leq i \leq n} \big|x_{m+i}\big|\Big)
\end{align*}
Here we may work with the Euclidean metric $g_{\rm E}$ since $\iota$ is an \emph{isometric} embedding.

The above computations show that, given the horizontal divergence-free vectorfields $V_j$ as in Eq.~\eqref{Vj, new}, the vectorfield
\begin{align*}
\vj(x_h+x_v) := \eta (x_v) V_j (x_h)\qquad \text{ for each $j=1,2,3,\ldots$} 
\end{align*}
is divergence-free and compactly supported on the tubular neighbourhood $\mathcal{T}_\delta(M)$. Define the ambient isotopy $$\{\mathcal{H}_{j,t}\}_{t\in I} \in \sdiff(\E^n)$$ via
\begin{align*}
\vj = \frac{\p \mathcal{H}_{j,t}}{\p t} \circ \left(\mathcal{H}_{j,t}\right)^{-1} \in \G\left(T\mathcal{T}_\delta(M)\right).
\end{align*}
Then $\{\mathcal{H}_{j,t}\}_{t\in I}$ is an extension of $\{h_{j,t}\}_{t\in I}$ supported in the tubular neighbourhood $\mathcal{T}_\delta(M)$.

\smallskip
\noindent
{\bf Step~3.} The total kinetic energy/action of $\{\mathcal{H}_{j,t}\}_{t\in I} \in \sdiff(\E^n)$ can be easily estimated from below. Indeed, using the definition of $\vj$ and the choice of $\eta$, we have 
\begin{align}\label{action, big H}
J\big(\{\mathcal{H}_{j,t}\}_{t\in I}\big) &= \frac{1}{2} \int_0^1 \left\|\vj(t)\right\|^2_{L^2(\mathcal{T}_\delta(M),\, g_{\rm E})}\,\dd t\nonumber\\
&= \frac{1}{2}\int_0^1 \int_{[-\delta,\delta]^{n-m}} \eta^2(x_v) \int_{M} \left|V_j(x_h)\right|_g^2 \sqrt{\det g}\,\dd x_h \,\dd x_v\nonumber\\
&\geq \frac{1}{2} \delta^{n-m}J\big(\{h_{j,t}\}_{t\in I}\big).
\end{align}
The final inequality holds because the nonnegative function $\eta(x_v) \equiv 1$ for $x_v \in \left[-\frac{\delta}{2},\,\frac{\delta}{2}\right]^{n-m}$.

Now let us scale $\{\mathcal{H}_{j,t}\}_{t\in I}$ via the maps $O_j$ introduced above; choose $\delta_j$ as in the statement of the theorem in lieu of $\delta$ for the width of the tubular neighbourhood. We thus obtain on each ${\bf B}_j$ an isotopy $\{{O_j}_\#\mathcal{H}_{j,t}\}_{t\in I}$ that is compactly supported in $\mathcal{T}_{\delta_j}\left(\frac{1}{2}{\bf B}_j\right) \subset \frac{3}{4}{\bf B}_j$, because $\delta_j < \frac{\rho_j}{4}$ where $\rho_j$ is the radius of ${\bf B}_j$.  Since $O_j$ is a Euclidean homothety modulo rigid motions, we deduce from Eq.~\eqref{jacobian of Oj} that
\begin{align}\label{action, Oj H}
J\Big(\big\{{O_j}_\#\mathcal{H}_{j,t}\big\}_{t\in I}\Big) &= \left[{\bf Jac}(O_j)\right]^2 J\big(\{\mathcal{H}_{j,t}\}_{t\in I}\big)\nonumber\\
&= \left(\frac{\rho_j}{2R}\right)^{2n} J\big(\{\mathcal{H}_{j,t}\}_{t\in I}\big)\qquad \text{for each $j=1,2,3,\ldots$}.
\end{align}

Recall that $\left\{{\bf B}_j\right\}$ are disjoint open balls in $I^n$. We are now at the stage of defining the isotopy $\{\xi_t\}_{t \in I}$ by glueing the above ${O_j}_\#\mathcal{H}_{j,t}$: 
\begin{equation}\label{def, xi_t, new}
\xi_t := \begin{cases}
{O_j}_\#\mathcal{H}_{j,t}\qquad \text{ on each } {\bf B}_j,\\
\id \qquad \qquad \text{on } I^n \sim \left(\bigsqcup_{j=1}{\bf B}_j \right).
\end{cases}
\end{equation}
This together with Eqs.~\eqref{action, Oj H} and \eqref{action, big H} yields that
\begin{align}\label{J of xi, new}
J\big(\{\xi_t\}_{t\in I}\big) &= \sum_{j=1}^\infty J\Big(\big\{{O_j}_\#\mathcal{H}_{j,t}\big\}_{t\in I}\Big) \nonumber\\
&\geq \sum_{j=1}^\infty \frac{1}{2} \left(\frac{\rho_j}{2R}\right)^{2n} (\delta_j)^{n-m}J\big(\{h_{j,t}\}_{t\in I}\big).
\end{align}

\smallskip
\noindent
{\bf Step~4.} In this step, we show that the isotopy $\{h_{j,t}\}_{t\in I} \subset \sdiff(M,g)$ can be chosen to have arbitrarily large total energy. This essentially follows from Eliashberg--Ratiu \cite[Appendix~A]{er}.

For this purpose, recall from \S\ref{sec: mass flow}:
\begin{quote}
The group homomorphism sending a smooth isotopy to a representitive of the $(m-1)^{\text{th}}$-cohomology:
\begin{align}\label{poincare dual}
\left\{h_{j,t}\right\}_{t\in I} \quad \longmapsto\quad \left[\int_0^1 \left\{  \frac{\p h_{j,t}}{\p t} \mres \Omega\right\} \,\dd t  \right] \quad \in H^{m-1}(M;\R)
\end{align}
is the Poincar\'{e} dual of $\tth\left(\{h_{j,t}\}_{t \in I}\right)$.
\end{quote}
Here $\tth$ is the lift of the mass flow homomorphism $\theta$, $m = \dim M$, and the square bracket denotes an equivalence class in the cohomology group. Since $\pi_1(M)$ has trivial centre, we may work with $\theta$ and $\tth$ interchangeably, thanks to \cite[Proposition~5.1]{fathi} (attributed to D. Sullivan).

Now, recall from Eq.~\eqref{turns} that the $\theta$-image of $\{h_{j,t}\}_{t \in I} \in \sdiff(M,g)$ is $N(j) [\sigma]$, where $[\sigma]$ is the nontrivial cycle representative of $H_1(M;\R)$ chosen as before. By the definition of mass flow homomorphism (see Eqs.~\eqref{tilde-massflow} and \eqref{tilde theta, def}), it holds that
\begin{align*}
\tth\left(\left\{h_{j,t}\right\}_{t \in I}\right)(\varphi) = \int_{M} \overline{\varphi \circ h_{j,1} - \varphi} \,\dvg\qquad \text{for each test function $\varphi \in C^0\left(M,\tone\right)$}.
\end{align*}
Here $\overline{\varphi \circ h_{j,1} - \varphi}$ is the $t \nearrow 1$ limit of the lift of ${\varphi \circ h_{j,t} - \varphi}$ from $\tone$ to the universal cover $\R$ with $\overline{\varphi\circ h_{j,0}-\varphi}=0$. Hence, by virtue of the choice of $\{h_{j,t}\}_{t\in I}$, we have
\begin{align}\label{new, lower bound for N}
\left\|
\tth\left(\left\{h_{j,t}\right\}_{t \in I}\right)\right\| \geq c N(j),
\end{align}
where $c$ is a constant depending only on the choice of $[\sigma]$. We have not yet specified the positive integers $N(j)$.

Now we can relate Eq.~\eqref{new, lower bound for N} to the total kinetic energy of $\{\xi_t\}_{t \in I}$ defined in Eq.~\eqref{def, xi_t, new}. This is achieved by way of the identity
\begin{align}\label{new, id}
\left\|\frac{\p h_{j,t}}{\p t} \mres \Omega\right\|^2 := \int_M \left(\frac{\p h_{j,t}}{\p t} \mres \Omega\right) \wedge \star \left(\frac{\p h_{j,t}}{\p t} \mres \Omega\right) = \int_M \left|\frac{\p h_{j,t}}{\p t}\right|^2_g\,\Omega,
\end{align}
where $\star$ is the Hodge star with respect to the metric $g$, and $\Omega$ is the normalised Riemannian volume form, namely that $\Omega = \dvg\slash{\rm Vol}_g(M)$. Indeed, we have
\begin{align}\label{Jh, lower bd}
J\big( \{h_{j,t}\}_{t\in I}\big) &:= \frac{1}{2}\int_0^1 \int_M \left|\frac{\p h_{j,t}}{\p t}\right|^2\,\dvg\,\dd t\nonumber\\
&= \frac{\volm}{2} \int_0^1 \left\|\frac{\p h_{j,t}}{\p t} \mres \Omega\right\|^2\,\dd t\nonumber\\
&\geq  \frac{\volm}{2} \left\{\int_0^1 \left\|\frac{\p h_{j,t}}{\p t} \mres \Omega\right\|\,\dd t\right\}^2\nonumber\\
&\geq \frac{\volm}{2} \left\|
\tth\left(\left\{h_{j,t}\right\}_{t \in I}\right)\right\|^2\nonumber\\
&\geq c\volm N(j)^2,
\end{align}
where $c$ depends only on $[\sigma] \in H_1(M,\R)$. In the above, the second line follows from Eq.~\eqref{new, id}, the third line holds by Jensen's inequality, the fourth line follows from the surrounding remarks about Eq.~\eqref{poincare dual}, and the final line can be deduced from Eq.~\eqref{new, lower bound for N}.

Therefore, we now infer from Eqs.~\eqref{J of xi, new} and \eqref{Jh, lower bd} that
\begin{align}\label{final lower bound}
J\big(\{\xi_t\}_{t\in I}\big) \geq  c \volm  \sum_{j=1}^\infty \left(\frac{\rho_j}{2R}\right)^{2n} \left({\delta_j}\right)^{n-m} N(j)^2.
\end{align}   
Again, $c$ depends only on $[\sigma]$. Thus, by choosing for each $j \in \mathbb{N}$ the positive integer $N(j)$ sufficiently large that depends only on $\rho_j$, $\delta_j$, $m=\dim M$, and $n$ --- for instance,
\begin{align*}
N(j) =  \lceil {j^{-\frac{1}{2}} \left(\rho_j\right)^{-2n} \left(\delta_j\right)^{m-n}} \rceil,
\end{align*} 
we deduce from Eq.~\eqref{final lower bound} that
\begin{align*}
J\big(\{\xi_t\}_{t\in I}\big) = \infty.
\end{align*}

\smallskip
\noindent
{\bf Step 5.} It remains to show the existence of another isotopy $\{\eta_t\}_{t \in [0,1]} \subset \sdiff(I^n)$ with 
\begin{align*}
\eta_0=\id,\qquad \eta_1=\xi_1,\qquad \text{ and } J\big(\{\eta_t\}_{t \in [0,1]}\big)<\infty.
\end{align*}
This immediately follows from Theorem~\ref{thm: sh} that $\xi_1 \in \sdiff(I^n)$ is an attainable diffeomorphism. See Definition~\ref{def: attain}, Remark~\ref{rem on bdry smoothness}, and Shnirelman \cite{s93}.

  The proof is now complete.  \end{proof}

\section{Concluding Remarks}\label{sec: remarks}

In this note, we constructed explicitly a smooth isotopy $\{\xi_t\}_{t \in I}$ of volume-preserving and orientation-preserving diffeomorphism on $I^n$ for $n \geq 3$ with infinite total kinetic energy/action. This isotopy has no self-cancellations --- in each connected component of its support (a subset of ${\bf B}_j$), the vectorfield of $\{\xi_t\}_{t \in I}$ never comes to a stop, nor does the ``fluid'' domain return to any previous configurations at any time instant $t \in [0,1]$. Such $\{\xi_t\}_{t \in I}$ is concentrated near homothetic copies of the  isometrically embedded image of a topologically complicated manifold-with-boundary $M$, smeared out in  thin tubular neighbourhoods, wherein the vectorfields associated to $\{\xi_t\}_{t \in I}$ make extremely rapid turns. Explicitly computations in \S\ref{sec: proof} yield that $J\big(\{\xi_t\}_{t \in I}\big)=\infty$.

On the other hand, as $\xi_1$ is an  attainable diffeomorphism by Shnirelman \cite{s93}, one can always make a ``short-cut'' by connecting $\xi_0=\id$ to $\xi_1$ via a finite-energy path inside $\sdiff(I^n)$. The energy-saving mechanism lies in the lift from the isotopy on submanifold $M^m$ to the ambient isotopy on $I^n$. Short-cuts arise from the extra $n-m$ dimensions.

In the end, let us take the opportunity to correct a flawed theorem in an earlier version \cite{early} of this note. In \cite[Definition~1.1]{early} we introduced the following notion. Let $(M^m,g)$ be a compact Riemannian manifold-with-boundary as in the current note. We say that an isometric embedding $\iota: (M,g) \emb \E^n$ has the ``rigid isotopy extension property'' if and only if for every $\{\xi_t\}_{t \in I} \subset \sdiff(M,g)$ and for any $\e>0$, the following two conditions hold:
 \begin{itemize}
 \item
 There exists $\left\{\hat{\xi}_t\right\}_{t\in I} \subset \sdiff\left(\E^{{n}}\right)$ such that $
J\big(\{\xi_t\}_{t\in I}\big) + \e > J\left(\left\{\hat{\xi}_t\right\}_{t\in I}\right)$ and 
$\hat{\xi}_t \circ \iota = \iota \circ \xi_t$ for all $t \in I$.
\item
For any $\left\{\check{\xi}_t\right\}_{t\in I} \subset \sdiff\left(\E^{{n}}\right)$ such that  
$\check{\xi}_t \circ \iota = \iota \circ \xi_t$ for all $t \in I$, one must have  $
J\left(\{\xi_t\}_{t\in I}\right) < J\left(\left\{\check{\xi}_t\right\}_{t\in I}\right) + \e$.
 \end{itemize}
Then we claimed in \cite[Theorem~1.4]{early} a dynamical/topological obstruction on $M$ for the \emph{existence} of such an isometric embedding $\iota$:
\begin{quote}
Suppose that $(M,g)$ has nontrivial first real homology and its fundamental group has trivial centre. Then any of its isometric embeddings into the Euclidean space $\E^n$ with $n \geq 3$ cannot have the rigid isotopy extension property.
\end{quote}

This theorem, albeit true, is vacuously true: the rigid isotopy extension property defined above can \emph{never} hold! The reason is already manifested in the proof of main Theorem~\ref{thm: main} ---  we can always choose an ambient isotopy, which costs arbitrarily small extra kinetic energy than $\{\xi_t\}_{t \in I}$, by repeating the construction in Step~2 of the proof of Theorem~\ref{thm: main} with very thin tubular neighbourhoods. We are deeply indebted to an anonymous referee of \cite{early} for pointing out this issue.

\bigskip
\noindent
{\bf Acknowledgement}. The research of SL is supported by National Natural Science Foundation of China Project $\#$12201399,  SJTU-UCL joint seed fund WH610160507/067, and the Shanghai Frontier Research Institute for Modern Analysis. The author is greatly indebted to Professor Alexander Shnirelman for very insightful discussions on his works \cite{s93, s94}, when the author undertook a CRM--ISM postdoctoal fellowship at the Centre de Recherches Math\'{e}matiques and the Institut des Sciences Math\'{e}matiques in Montr\'{e}al during 2017--2019. SL also extends sincere gratitude to the anonymous referees of earlier versions of the manuscript for pointing out the example of genus-2 handlebody and for illustrating the extension of isotopies in small tubular neighbourhoods. SL also thanks Professors László Székelyhidi Jr. and Yanyi Xu for valuable comments, and Professor Shengkui Ye for kindly teaching me topology of smooth manifolds.

\bigskip

\end{document}